  \theoremstyle{plain}
  \newtheorem{thm}{Theorem}[section]
  \newtheorem{lem}[thm]{Lemma}
   \newtheorem{cor}[thm]{Corollary}
  \newtheorem{que}[thm]{Question}
  \newtheorem{pro}[thm]{Proposition}
  \newtheorem{con}[thm]{Conjecture}
  \theoremstyle{definition}
\newcommand{\CC}{{\mathbb{C}}}
\newcommand{\Irr}{{\operatorname{Irr}}}
\newcommand{\cd}{{\operatorname{cd}}}
\let\leq=\leqslant
\let\geq=\geqslant
\title[Average element order and number of conjugacy classes]{The average element order\\ and the number of conjugacy classes\\ of finite groups}
 \author{E. I. Khukhro}
\address{Charlotte Scott Research Centre for Algebra, University of Lincoln, U.K., and \newline \indent  Sobolev Institute of Mathematics, Novosibirsk, 630090, Russia}
\email{khukhro@yahoo.co.uk}
 \author{A. Moret\'o}
 \address{Departament  de Matem\`atiques, Universitat de Val\`encia, 46100, Burjassot, Val\`encia Spain}
\email{Alexander.Moreto@uv.es}
 \author{M. Zarrin}
 \address{Department of Mathematics, University of Kurdistan, P.O. Box: 416,  Sanandaj, Iran}
\email{M.Zarrin@uok.ac.ir}
\date{}
\keywords{$p$-group, nilpotent group, number of conjugacy classes, element orders}
\subjclass[2010]{Primary 20D15, Secondary 20C15, 20E45}
\begin{document}

\begin{abstract}
Let $o(G)$ be the average order of the elements of $G$, where $G$ is a finite group. We show that there is no polynomial lower bound for $o(G)$ in terms of $o(N)$, where $N\trianglelefteq G$, even when $G$ is a prime-power order group and $N$ is abelian. This gives a negative answer to a question of A.~Jaikin-Zapirain.
\end{abstract}

\maketitle

\section{Introduction}

Let $G$ be a finite group, and let $o(G)$ be the average order of the elements of $G$, that is,
$$
o(G)=\frac{\sum_{g\in G}|g|}{|G|},
$$
where $|g|$ denotes the order of $g$. A.~Jaikin-Zapirain mentioned in \cite[p.~1134]{jai} that it would be very interesting to understand the relation between $o(G)$ and $o(N)$, where $N$ is a normal subgroup of $G$. More specifically, he posed the following question.

\begin{que}
\label{jz}
Let $G$ be a finite ($p$-) group and $N$ a normal (abelian) subgroup of $G$. Is it true that $o(G)\geq o(N)^{1/2}$?
\end{que}

Our main result provides a strong  negative answer to this question, which shows that one cannot obtain this type of inequality for any real number $c$ as the exponent on the right,  not just $1/2$.

\begin{thm}\label{t-a}
Let $c>0$ be a real number and let $p\geq 3/c$ be a prime. Then there exists a finite $p$-group $G$ with a normal abelian subgroup $N$ such that $o(G)<o(N)^c$.
\end{thm}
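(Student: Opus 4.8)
The plan is to keep $N$ as simple as possible and to concentrate all the difficulty in forcing $G$ to have very few elements of large order. I would take $N=(\mathbb{Z}/p^{n}\mathbb{Z})^{d}$ homocyclic of exponent $p^{n}$. Counting the elements of each order (an element has order $p^{j}$ iff it lies in $p^{\,n-j}N\setminus p^{\,n-j+1}N$) gives $o(N)=p^{n}(1-p^{-d})+(\text{lower terms})$, so that $\tfrac12 p^{n}\le o(N)\le p^{n}$; thus $o(N)$ is essentially $p^{n}$, and the whole problem reduces to constructing $G\rhd N$ with $o(G)$ so small that $\log_{p}o(G)$ is of smaller order than $cn$.

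The mechanism for a small $o(G)$ must be cancellation in $p$-th powers, since direct products only take least common multiples and never lower orders. If $g\notin N$ and $\sigma$ denotes conjugation by $g$ on $N$, then $(gn)^{p}=g^{p}\,\nu_{\sigma}(n)$, where $\nu_{\sigma}=1+\sigma+\dots+\sigma^{p-1}$ is the norm; hence the orders of the elements of the coset $gN$ are controlled by $\operatorname{im}\nu_{\sigma}$, which lies in the fixed points $N^{\langle\sigma\rangle}$. So I would arrange that \emph{every} nontrivial element acts with fixed points of small exponent. Over $\mathbb{Z}/p^{n}\mathbb{Z}$ this is possible through a ramified action: realising $N$ (or a layer of it) as $\mathbb{Z}[\zeta_{p^{k}}]/(p^{n})$ with the generator acting as multiplication by $\zeta_{p^{k}}$. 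Here $\lambda=\zeta_{p^{k}}-1$ is the uniformizer of a chain ring, so $\ker(\lambda)$ has exponent $p$; every nontrivial power of the generator has fixed points of exponent only $p$, the norm image has exponent $p$, and the elements above such a coset have order just $p\,|q|$.

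The crux of the argument is that one ramified layer does not suffice, and overcoming this is the step I expect to be the main obstacle. To dilute the $(1-p^{-d})$-proportion of elements of $N$ of order $p^{n}$ one needs $G/N$ to be large; but a ramified \emph{cyclic} action forces the acting group to be small relative to $N$, while a large \emph{elementary abelian} action reintroduces elements whose fixed points have full exponent $p^{\,n-1}$ (the Frobenius-complement obstruction: no noncyclic $p$-group acts fixed-point-freely). Balancing these two effects, a single layer yields only $o(G)\approx o(N)^{1/2}$ — exactly the exponent in Question~\ref{jz}. To pass below an arbitrary $c$ I would therefore iterate, building a tower $G=G_{0}\rhd G_{1}\rhd\cdots\rhd G_{r}=N$ in which each section $G_{i}/G_{i+1}$ acts ramifiedly on the layer below, so that the proportion of elements of $G$ of order at least $p^{j}$ decays geometrically with ratio less than $1/p$. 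The real work is to choose the ranks and exponents at the successive layers so that (i) this geometric decay is preserved at every step, which keeps $o(G)=\sum_{j}p^{j}\Pr(\text{order}=p^{j})$ bounded by a fixed power of $p$, independently of $n$, while (ii) the bottom layer $N$ stays homocyclic of exponent $p^{n}$.

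Finally, with $o(G)$ bounded by a fixed power of $p$ and $o(N)\ge\tfrac12 p^{n}$, I would fix $p\ge 3/c$, choose $n$ large (depending on $c$ and $p$), and verify $o(G)<o(N)^{c}$ by a short computation: taking base-$p$ logarithms reduces the inequality to a bound of the form $O(1)<c\,(n-\log_{p}2)$, where the hypothesis $p\ge 3/c$ is exactly what is needed to absorb the constant arising from the height of the tower and from $o(N)\ge\tfrac12 p^{n}$.
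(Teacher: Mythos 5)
Your first layer is correct and is a genuinely elementary mechanism: in $\bigl(\ZZ[\zeta_{p^k}]/(p^n)\bigr)\rtimes C_{p^k}$, with the generator acting as multiplication by $\zeta_{p^k}$, an element mapping to an automorphism of order $p^j$ acts as a primitive $p^j$-th root of unity $\omega$, and $1+\omega+\cdots+\omega^{p^j-1}=0$, so every element outside $N$ has order at most $p^k$. But, as you concede, optimizing $k$ gives only $o(G)\approx 2\,o(N)^{1/2}$, which sits on the boundary of Question~\ref{jz} (on the wrong side, because of the constant) and proves nothing. Hence the entire content of Theorem~\ref{t-a} rests on your tower, and that is where there is a genuine gap.

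The tower step, as described, does not work, and the problem is not bookkeeping of ranks and exponents. Ramifiedness of the action of $G_i/G_{i+1}$ on $G_{i+1}/G_{i+2}$ controls powers of elements of $G_i\setminus G_{i+1}$ only \emph{modulo} $G_{i+2}$: it tells you that $p^j$-th powers land in $G_{i+2}$, but which element of $G_{i+2}$ they are depends on the extension, not on the layer-on-layer actions, and that element may perfectly well have order $p^n$. (Your formula $(gn)^p=g^p\nu_\sigma(n)$ also needs the layer being acted on to be abelian, which fails below the top layer.) Nor can the ramified mechanism itself be enlarged: multiplications by $p$-power roots of unity of $\ZZ[\zeta_{p^k}]/(p^n)$ form a cyclic group, so a noncyclic acting group must contain elements that are not such multiplications, and the Frobenius-complement obstruction you name is not circumvented by stacking cyclic layers. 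Note also that if your tower did what you claim --- $o(G)$ bounded by a fixed power of $p$, independently of $n$, for a fixed prime $p$ --- it would answer negatively the question the paper explicitly leaves open in the introduction (existence of $c(p)>0$ with $o(G)\geq o(N)^{c(p)}$ for all $p$-groups), a much stronger statement than Theorem~\ref{t-a}. What the paper actually does is a single extension in which the cyclic ramified group is replaced by Wall's secretive $p$-group $P$ of Lemma~\ref{wall}: the index $|P:\Phi(P)|=p^p$ dilutes the large orders, while the crucial power structure --- every element of $P\setminus\Phi(P)$ has order $p^2$ with $p$-th power a \emph{nontrivial} element of one fixed central subgroup $\langle z\rangle$ of order $p$ --- means that norm cancellation is needed for the single element $z$ only; this is arranged in Lemma~\ref{construction} by reducing a characteristic-zero irreducible $P$-module with $C_V(z)=0$ modulo $p^s$, and then $(wh)^{p^2}=(vz^k)^p=1$ even when $h$ itself has fixed points of full exponent. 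So the missing idea is not ``every element acts with small fixed points'' but ``all $p$-th powers outside a small subgroup fall into one subgroup whose generator has vanishing norm''; the existence of groups with that property (Wall, 1975, answering Kovacs--Neub\"user--Neumann) is a deep fact tied to the Lie-ring machinery of the restricted Burnside problem, not something recoverable by layering cyclic ramified actions.
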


By the condition $p\geq 3/c$ in this theorem, putting $c=1/2$ we obtain a negative answer to Question~\ref{jz}  for all primes $p\geq 7$. But in fact a more careful consideration of the parameters involved gives a negative answer for all primes $p\geq 5$.

\begin{cor}\label{cor}
Question~\ref{jz} has a negative answer for any prime $p\geq 5$.
\end{cor}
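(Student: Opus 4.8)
The plan is to derive the corollary from Theorem~\ref{t-a} by splitting into two ranges of $p$. Taking $c=1/2$, the hypothesis $p\geq 3/c$ of Theorem~\ref{t-a} becomes $p\geq 6$, which for primes means $p\geq 7$. Hence for every prime $p\geq 7$ I would simply apply Theorem~\ref{t-a} with $c=1/2$: it yields a finite $p$-group $G$ with a normal abelian subgroup $N$ such that $o(G)<o(N)^{1/2}$, and this pair already contradicts the inequality proposed in Question~\ref{jz}. The only prime not covered in this way is $p=5$, where the sufficient condition $p\geq 6$ just fails, so handling $p=5$ is the whole content of the corollary (the small primes $p=2,3$ fall outside the scope of this construction).

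For $p=5$ the plan is to revisit the explicit family of groups produced in the proof of Theorem~\ref{t-a} and to re-run its estimates without the lossy simplifications that led to the round constant $3$. Concretely, I would keep a sharp upper bound for $o(G)$ together with a sharp lower bound for $o(N)$ as functions of the defining parameters of the construction (such as the exponent of $N$ and the index $[G:N]$), and then \emph{optimize} over those parameters instead of discarding lower-order contributions. Because the construction is completely explicit, after setting $p=5$ this reduces to checking a single inequality $o(G)<o(N)^{1/2}$ for a well-chosen value of the parameters---or, letting the relevant parameter grow, for the limiting values---a verification I expect to be routine once the estimates are in place.

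The real obstacle is that $p=5$ is genuinely borderline: the simplified form of Theorem~\ref{t-a} misses it by exactly one prime, so the refined estimates must leave essentially no slack. The conceptual reason the refinement should succeed is that the constant $3$ in the condition $p\geq 3/c$ is an artifact of rounding and is not best possible; I expect a careful count to lower the effective threshold constant $\gamma$ (the constant for which $p\geq \gamma/c$ suffices) to something strictly below $5/2$. Since for $c=1/2$ the admissible range is $p\geq 2\gamma$, any $\gamma<5/2$ gives $2\gamma<5\leq p$ and thus places $p=5$ inside it. The crux is therefore to make this improvement quantitative for $p=5$: to bound the error terms in $o(G)$ and $o(N)$ tightly enough, and to choose the parameter regime, so as to guarantee the strict inequality $o(G)<o(N)^{1/2}$. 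Once that finite verification is carried out, the corollary follows at once.
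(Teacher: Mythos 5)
Your proposal takes essentially the same route as the paper: for $p\geq 7$ apply Theorem~\ref{t-a} with $c=1/2$, and for $p=5$ reuse the very same construction while avoiding the lossy final step $o(G)<2p^2\leq p^3\leq p^{pc}$. The verification you deferred is exactly the one-line check the paper performs: with $s=p+1=6$, the bounds already established in the proof of Theorem~\ref{t-a} give $o(G)<2\cdot 5^2=50$ while $o(N)^{1/2}\geq(5^6-5^5)^{1/2}>111$, so no parameter optimization is needed and the strict inequality $o(G)<o(N)^{1/2}$ holds at once.
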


After the current paper had been already largely written, we received from A.~Jaikin-Zapirain an unpublished earlier manuscript, where he constructed counterexamples to Question~\ref{jz} using Frobenius groups. This manuscript also stated the following questions of similar nature.

\begin{que}
\begin{itemize}
  \item[(a)]  Is there a constant $c_1 > 0$ such that for every finite group  $G$ we have
$o(G)\geq  c_1\cdot  mo(G)^{1/2}$, where  $mo(G)$ is the maximum order of elements of $G$?
  \item[(b)] Is there a constant $c_2 > 0$ such that for every finite group  $G$ and every normal subgroup $N$ of $G$ we have $o(G)\geq  c_2\cdot  o(N)^{1/2}$?
\end{itemize}
\end{que}

Our examples proving Theorem~\ref{t-a} also give negative answers  to these questions.

We have tried to understand the relevance of  Question~\ref{jz} to problems on the number of conjugacy classes of finite groups. Despite the fact that this question has a negative answer, our counterexamples do not disprove certain possible consequences, so we think it is worth mentioning these problems on the number of conjugacy classes in Section~2. In addition, the following question remains open.

\begin{que}
Fix a prime $p$.  Does there exist a number $c=c(p)>0$ depending on $p$ such that $o(G)\geq o(N)^c$ for any finite $p$-group $G$ and any (abelian) normal subgroup  $N$ of $G$?
\end{que}

In Section~3 we discuss prospective  counterexamples related to the Hughes conjecture and so-called anti-Hughes $p$-groups, which provided original motivation to our constructions. But the actual counterexamples are constructed from so-called secretive $p$-groups in Section~4. In Section~5  we discuss other questions related to the function  $o(G)$, the recently intensely studied function $\psi(G)=\sum_{g\in G}|g|$, and the number of conjugacy classes and irreducible character degrees.

\section{Implications of Question \ref{jz}}

As usual, given a finite group $G$ we write $k(G)$ to denote the number of conjugacy classes of $G$. Also, if $L\leq K$ are normal subgroups of $G$ with $K/L$ abelian, we say that $K/L$ is an abelian section of $G$.
Using Lemma 2.6 and Corollary 2.10 of \cite{jai}, it is easy to see that if Question \ref{jz} had an affirmative answer, then the following question would also have an affirmative answer.

\begin{que}\label{q21}
Let $p$ be a prime. Let $G$ be a finite group with an abelian section of exponent $p^e$. Is it true that $k(G)\geq\sqrt{p^e-p^{e-1}}$?
\end{que}

In particular, this implies that if $G$ is a $p$-solvable group  and $p$ divides $|G|$, then $k(G)\geq\sqrt{p-1}$. This is a first consequence of Question \ref{jz} that is, we think, nontrivial. It had been proved in \cite{hk00} that for $G$ solvable, $k(G)\geq2\sqrt{p-1}$ and this bound is best possible. This paper has generated a lot of research since 2000.  It was mentioned in \cite{hk00} that if this bound holds for $p$-solvable groups and the  Alperin--McKay conjecture holds, then this bound holds for arbitrary groups. In fact, it suffices to assume that McKay's conjecture holds:
$$
k(G)=|\Irr(G)|\geq|\Irr_{p'}(G)|=|\Irr_{p'}(N_G(P))|=|\Irr(N_G(P)/P')|=k(N_G(P)/P'),
$$
and the last group is $p$-solvable. Here,  $\Irr(G)$ stands for the set of irreducible characters of a group $G$, and $\Irr_{p'}(G)$ is the set of irreducible characters of $G$ of degree not divisible by $p$. The equality $|\Irr_{p'}(G)|=|\Irr_{p'}(N_G(P))|$ is McKay's conjecture, which is one of the main problems in representation theory.
Much progress has been made recently on McKay's conjecture. It was reduced to a problem on simple groups in \cite{imn} and the case $p=2$ was proved in \cite{ms}, but the conjecture is (as of now) open for odd $p$. See \cite{nav} for a detailed exposition of the results and techniques that have been used.

 It was not until 2016 that the bound $k(G)\geq2\sqrt{p-1}$ was proved for arbitrary groups  in \cite{mar}. It was shown subsequently in \cite{mm} that $|\Irr_{p'}(G)|\geq2\sqrt{p-1}$ for arbitrary finite groups, a result that is consistent with McKay's conjecture. In view of this, one could think that, perhaps,
$|\Irr_{p'}(G)|\geq\sqrt{p^e-p^{e-1}}$. However the dihedral $2$-groups, for instance, are already a counterexample for $p=2$. This means that Question \ref{q21} does not follow from the $p$-solvable case and McKay's conjecture.

See the introduction of \cite{ms} for a summary of the developments in this area. In that paper, A. Mar\'oti and I. Simion  proved that there exists a constant $c>0$ such that $k(G)\geq cp$ for any finite group $G$ of order divisible by $p^2$.
However, it is not true that if we assume that $p^n$ divides $|G|$, then $k(G)$ grows (exponentially) with $n$, even in prime-power order groups: as mentioned in Remark (v) of \cite{hk00}, L. Pyber pointed out that L. Kovacs and C. R. Leedham-Green \cite{kl} had constructed for every odd prime $p$ a group $G$ of order $p^p$ and of exponent $p$ with $k(G)=\frac{1}{2}(p^3-p^2+p+1)$.
A lower bound  for $o(G)$ in terms of $o(N)$, for any $N$ normal abelian in $G$,  would imply that $k(G)$ does grow with the exponent of an abelian section of $p$-power order. There is one recent result of this type, although of a  different nature. Let $B$ be a Brauer $p$-block of a finite group $G$ with defect group $D$. It was proved in Corollary 4 of \cite{oto} that if $\exp(Z(D))=p^m$, then $k(G)\geq k(B)\geq (p^m+p-2)/(p-1)$. (Here, $\exp(H)$ denotes the exponent of a group $H$.) In short, all of this seems to indicate that $k(G)$ grows with exponents. For instance, we propose the following question.

\begin{que}\label{q22}
Let $G$ be a $p$-group. Is it true that $k(G)\geq\exp(G)^{1/2}$?
\end{que}

This would be an immediate consequence of Corollary 2.10 of \cite{jai} and an affirmative answer to Question~\ref{zarrin} below, but we know that the answer to the latter  question is negative. If Question~\ref{q22} has an affirmative answer, it  will show that examples of $p$-groups with few classes like those of Kovacs and Leedham-Green necessarily have ``low" exponent.

Our counterexamples in this paper do not provide counterexamples to the questions posed in this section. If these bounds turned out to be false, it would still be interesting to see if polynomial bounds do exist. Our counterexamples show that if these bounds exist, they are not immediate consequences of a general result relating $o(G)$ and $o(N)$, where $N$ is normal abelian subgroup of~$G$.

\section{Anti-Hughes groups}\label{s-a-h}

Trying to prove that Question \ref{jz} has an affirmative answer for $p$-groups, we considered the following question. Recall that $\exp(H)$ denotes the exponent of a group $H$.

\begin{que}
\label{zarrin}
Let $G$ be a finite $p$-group. Is it true that $o(G)\geq\exp(G)^{1/2}$?
\end{que}

 Since $o(H)\leq\exp(G)$ for any subgroup $H$ of $G$, it is clear that an affirmative answer to Question \ref{zarrin} for a finite $p$-group $G$ implies an affirmative answer to Question \ref{jz} for the same group $G$.

Let us recall the Hughes conjecture. Given a finite group $G$ and a prime $p$, we define $H_p(G)=\langle x\in G\mid x^p\ne 1 \rangle$. D.~R.~Hughes \cite{hug} conjectured that if $1\neq H_p(G)\neq G$, then $|G:H_p(G)|=p$. Hughes and J.~G.~Thompson \cite{ht} proved that the Hughes conjecture holds for all groups that are not $p$-groups. Since then, a lot of effort has been devoted to the study of the Hughes conjecture for $p$-groups. There are a number of results in both directions. For instance, the conjecture is known to be true for $p=2$ and $p=3$. On the other hand, there are counterexamples for $p=5,7,11,13,17,19$ (and possibly a few more primes). A counterexample to the Hughes conjecture is called an anti-Hughes group. It is expected that counterexamples exist for every prime $p\geq5$,  but it is also known that the Hughes conjecture holds ``for almost all groups" in a certain precise sense \cite{khu88}; in particular, the exponent of an anti-Hughes group is bounded in terms of $p$. In all known counterexamples $|G:H_p(G)|=p^2$ and $\exp(G)=p^2$, but it is expected that there exist counterexamples with $|G:H_p(G)|>p^2$ and also counterexamples with $\exp(G)>p^2$. We refer the reader to \cite{khu93} and \cite{khu99} for  more detailed expositions on the Hughes conjecture and to \cite{hv} for the most recent paper where more counterexamples were built with the aid of computers. (Only the first counterexampe by G. E. Wall \cite{wal67} was constructed manually for $p=5$.)

Notice that by construction all the elements in $G\setminus H_p(G)$ have order $p$. Therefore, if $G$ is an anti-Hughes group, then the proportion of elements of order $p$ is at least $(p^2-1)/p^2$. It was natural to check if we can have a counterexample to Questions \ref{zarrin} and \ref{jz} among anti-Hughes groups. Notice however that these questions hold trivially for groups of exponent $p^2$. Therefore, there are no counterexamples to our questions among the known anti-Hughes groups. However, we have the following.

\begin{pro}
Assume that there exists an anti-Hughes group $G$ of exponent $p^3$. Then $G$ is a counterexample to Question \ref{zarrin}
\end{pro}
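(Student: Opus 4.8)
The plan is to bound $o(G)$ from above and show it falls below $\exp(G)^{1/2}=p^{3/2}$. First I would collect the relevant structural facts. By the theorem of Hughes and Thompson, the Hughes conjecture holds for groups that are not $p$-groups, so an anti-Hughes group $G$ is necessarily a $p$-group; moreover, since the conjecture is known for $p=2$ and $p=3$, the existence of $G$ forces $p\geq 5$. The subgroup $H_p(G)$ is characteristic, and by the definition of an anti-Hughes group we have $1\neq H_p(G)\neq G$ together with $|G:H_p(G)|\neq p$. As $G$ is a $p$-group this index is a power of $p$ that exceeds $1$ and differs from $p$, hence $|G:H_p(G)|\geq p^2$. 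Finally, every element $x$ with $x^p\neq 1$ lies in $H_p(G)$ by construction, so each element of $G\setminus H_p(G)$ has order exactly $p$.

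Next I would split the defining sum for $o(G)$ according to this partition. Writing $n=|G|$ and $h=|H_p(G)|$, the $n-h$ elements outside $H_p(G)$ each contribute $p$, while the $h$ elements of $H_p(G)$ each contribute at most $\exp(G)=p^3$. Hence
$$
o(G)=\frac{1}{n}\sum_{g\in G}|g|\leq \frac{(n-h)p+hp^3}{n}=p+\frac{h}{n}(p^3-p).
$$
Since $h/n=|G:H_p(G)|^{-1}\leq p^{-2}$, this yields
$$
o(G)\leq p+\frac{p^3-p}{p^2}=2p-\frac1p<2p.
$$

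It then remains to compare $2p$ with $\exp(G)^{1/2}=p^{3/2}$. The inequality $2p<p^{3/2}$ is equivalent to $2<p^{1/2}$, i.e. to $p>4$, and this holds because $p\geq 5$. Therefore $o(G)<2p<p^{3/2}=\exp(G)^{1/2}$, so $G$ violates the inequality in Question~\ref{zarrin} and is a counterexample. I expect the only genuinely load-bearing points to be the two facts that make this crude estimate suffice: that $|G:H_p(G)|\geq p^2$ (so that the dense set of order-$p$ elements really dominates the average), and that the existence of an anti-Hughes group already forces $p\geq 5$ (so that the weak bound $o(G)<2p$ still lies below $p^{3/2}$). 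The averaging computation itself is routine, and no sharper control of the orders inside $H_p(G)$ is needed.
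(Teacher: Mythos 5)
Your proof is correct and follows essentially the same route as the paper's: partition $G$ into $G\setminus H_p(G)$ (elements of order $p$) and $H_p(G)$ (orders at most $\exp(G)=p^3$), use $|G:H_p(G)|\geq p^2$ to get $o(G)<2p$, and conclude via $2p<p^{3/2}$ for $p\geq 5$. The only difference is that you explicitly justify the facts the paper leaves to its surrounding discussion (that $G$ must be a $p$-group with $p\geq 5$ and that the index of $H_p(G)$ is at least $p^2$), which is a welcome tightening rather than a different argument.
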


\begin{proof}
Write $|G|=p^n$.
As mentioned, all elements in $G\setminus H_p(G)$ have order~$p$. The elements in $H_p(G)$ have order at most $p^3$. Therefore,
$$
o(G)\leq \frac{(p^n -p^{n-2})p+ p^{n-2}p^3}{p^n}<2p<(p^3)^{1/2}=\exp(G)^{1/2},
$$
where in the last inequality we have used the fact that in an anti-Hughes group $p\geq 5$.
\end{proof}

A more detailed analysis of the orders of the  elements in $H_p(G)$ would be required to see if such a group is also a counterexample to Question  \ref{jz}. Fortunately there is a related, but simpler, construction that will allow us to find counterexamples. These are the so-called secretive $p$-groups (see Remark~4.8 of \cite{khu99}). Thanks to them, we can construct counterexamples for almost all primes.

\section{Secretive 
$p$-groups and counterexamples}

The key to our construction will be the following groups.

\begin{lem}[Wall's secretive $p$-group {\cite{wal75}}]
\label{wall}
For every prime $p$, there exists a finite $p$-group $P$ such that 
\begin{itemize}
  \item[\rm (a)] the Frattini subgroup $\Phi (P)$ has exponent $p$,
  \item[\rm (b)] $|P/\Phi (P)|=p^p$, and 
  \item[\rm (c)] all the elements in $P\setminus \Phi(P)$ have order $p^2$ and their $p$-th powers are nontrivial elements of a central cyclic subgroup $\langle z\rangle=P^p$ of order $p$.
\end{itemize}
\end{lem}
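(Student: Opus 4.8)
The plan is to reduce (a)--(c) to a statement about an anisotropic power form and then build a group realizing it. Since $\Phi(P)=P'P^p$, conditions (a) and (c) force $P^p=\langle z\rangle\cong C_p$ to be central and $\Phi(P)=\gamma_2(P)$ to have exponent $p$, while (b) says $\bar P:=P/\Phi(P)\cong\FF_p^{\,p}$. The point is that under these hypotheses the $p$-power map descends to a function $Q\colon\bar P\to\FF_p$ determined by $g^p=z^{Q(\bar g)}$, and (c) is \emph{exactly} the assertion that $Q$ is anisotropic, i.e. $Q(v)\ne0$ for every $v\ne0$: indeed $g\notin\Phi(P)$ then has $g^p=z^{Q(\bar g)}\ne1$, so $g$ has order $p^2$ with $g^p\in\langle z\rangle\setminus\{1\}$. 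First I would record the numerology this imposes. Since $a=a^p$ on $\FF_p$, one may represent $Q$ by a homogeneous polynomial of degree $p$; by Chevalley--Warning a homogeneous form of degree $d$ in $n$ variables with $d<n$ has a nontrivial zero, so anisotropy demands $d\ge n$. As the $p$-power map cannot produce degree larger than $p$, this forces $n\le p$, and the extremal choice $n=p$ --- the Chevalley--Warning boundary $d=n=p$, where anisotropic forms do exist --- is what yields $|P/\Phi(P)|=p^p$.

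Next I would explain why the nilpotency class must be exactly $p$. A crucial negative observation (for odd $p$) is that a class-$2$ group is useless: there $(xy)^p=x^py^p[y,x]^{\binom p2}$ with $p\mid\binom p2$, so once $\Phi(P)$ has exponent $p$ the map $g\mapsto g^p$ is a \emph{homomorphism} $\bar P\to\langle z\rangle$, that is, a linear functional on $\FF_p^{\,p}$; every such functional has a nonzero kernel, contradicting anisotropy. The nonlinearity must therefore come from deeper commutators. This is supplied by the Hall--Petrescu collection formula $(xy)^p=x^py^p\prod_{i=2}^{p}c_i^{\binom pi}$ with $c_i\in\gamma_i$: for $2\le i\le p-1$ the coefficient $\binom pi$ is divisible by $p$, so once $\gamma_2(P)$ has exponent $p$ all of these terms die and only the weight-$p$ term $c_p\in\gamma_p(P)$ (coefficient $\binom pp=1$) survives alongside the base powers $x_i^p$. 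Collecting a weight-$p$ commutator in the exponents $a_1,\dots,a_p$ of $g=x_1^{a_1}\cdots x_p^{a_p}$ produces precisely a degree-$p$ monomial, so this is the source of the degree-$p$ form $Q$; the case $p=2$, where $\gamma_2=\gamma_p$ already carries the surviving commutator, is realized by $Q_8$ with $Q$ an anisotropic binary quadratic form.

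With this in hand I would construct $P$ as a quotient of the free nilpotent group of class $p$ on $x_1,\dots,x_p$ (equivalently, specify a graded $\FF_p$-Lie ring with one-dimensional top layer $\gamma_p$), imposing the relations that (i) all of $\gamma_2$ has exponent $p$, (ii) each $x_i^p$ lies in a fixed central subgroup $\langle z\rangle$ of order $p$, and (iii) the whole weight-$p$ layer is identified with $\langle z\rangle$ so that $g^p=z^{Q(\bar g)}$ for an explicit anisotropic form $Q$ of degree $p$ in $p$ variables. It is cleaner to argue with the collection formula directly than through the Lazard correspondence, which requires class $<p$ and so just misses the weight-$p$ term we need. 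Given a consistent such presentation, (b) is immediate (there are $p$ generators and every relator lies in $\Phi(P)$), (a) is (i), and (c) follows from the anisotropy of $Q$ as in the first paragraph.

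The hard part will be the \emph{simultaneous} realization in step (iii): producing a form of degree $p$ in $p$ variables that is anisotropic \emph{and} is genuinely the weight-$p$ power form of a well-defined group law. Concretely, one must verify that the proposed Lie ring satisfies the Jacobi identity (so that $P$ exists and has class $p$), that collapsing the weight-$p$ layer onto the single line $\langle z\rangle$ does not force extra collapse which would drop $|P/\Phi(P)|$ below $p^p$ or trivialize some $x_i^p$, and --- in order that $Q$ be well defined on $\bar P$ rather than merely on a chosen set of coset representatives --- that $g^p$ depends only on $g\Phi(P)$, i.e. that multiplying $g$ by Frattini elements contributes only higher-weight corrections that do not perturb the $\langle z\rangle$-component. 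Controlling the full Hall--Petrescu expansion so that exactly the degree-$p$ term survives, and choosing the structure constants so that the resulting form is anisotropic for every prime $p$, is the content of Wall's construction.
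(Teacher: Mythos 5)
Your proposal does not prove the lemma; it only reformulates it. The entire content of the statement is the \emph{existence} of the group $P$, and your final paragraph defers exactly that step --- verifying consistency of the presentation, showing that collapsing the weight-$p$ layer onto $\langle z\rangle$ causes no further collapse, and choosing structure constants so that the resulting degree-$p$ form is anisotropic for every prime --- to ``the content of Wall's construction.'' That is circular: the lemma \emph{is} Wall's construction. (For comparison, the paper does not reprove it either; it quotes \cite{wal75} and only adds the remark that property (a), not explicit there, follows from Wall's construction because $P/[P,P]$ has exponent $p$, so $\Phi(P)=[P,P]$, and $[P,P]$ has exponent $p$.) A proof along your lines would have to exhibit a concrete anisotropic form of degree $p$ in $p$ variables (the norm form of $\FF_{p^p}/\FF_p$ is the natural candidate, which you never name) and then actually produce a consistent group or graded Lie ring of class $p$ whose $p$-power map induces that form; none of this is carried out, and it is precisely the hard part.

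There is also a mathematical error in the ``numerology'' you present as forced. You claim that the $p$-power map cannot have degree larger than $p$, so that anisotropy plus Chevalley--Warning forces $|P/\Phi(P)|\le p^p$. This degree bound is only valid when the nilpotency class is at most $p$ (your own Hall--Petrescu argument needs class $\le p$; for class $>p$, commutators of weight $w>p$ enter the collection formula with exponents that need not be divisible by $p$, producing higher-degree terms in $Q$). Conditions (a) and (c) do \emph{not} cap the rank at $p$: as the paper itself recalls, Khukhro \cite{khu86} constructed secretive $p$-groups with $\Phi(P)=\Omega_1(P)$ of exponent $p$, $|P^p|=p$, and rank $1+k(p-1)$, which exceeds $p$ for $k\ge 2$ (such groups are known to exist, e.g., with rank $2p-1$ for $p=5,7,\dots,19$). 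This does not hurt the lemma itself, since only rank exactly $p$ is needed there, but the constraint you derive is false as stated and should be replaced by the correct assertion: \emph{within class $\le p$}, rank $p$ is the Chevalley--Warning extremal case.
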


These groups are special cases of a somewhat more general construction of G.~E.~Wall in \cite{wal75}, where the exponent of the group was $p^m$ for $m\geq 2$ and the rank of $P/\Phi (P)$ could be any integer between 2 and $p^{m-1}$. It is not explicitly stated in \cite{wal75} that  $\Phi (P)$ has exponent $p$
in the group $P$ in Lemma~\ref{wall}, only that $P$ has exponent $p^2$ and $P/[P,P]$ has exponent $p$; but it is straightforward from the construction in \cite{wal75} that $[P,P]$ has exponent $p$.

These groups are   so-called secretive $p$-groups, and their construction in \cite{wal75} answered a question of L.~Kovacs, J.~Neub\"user and B.~Neumann \cite{knn}. For $m=2$ the first author \cite{khu86} also constructed such groups with even larger rank of $P/\Phi(P)$, which gave a negative  answer  to a question of N.~Blackburn and A.~Espuelas \cite{be}. The construction in \cite{khu86} produced $p$-groups $P$ with $\Phi(P)=\Omega_1(P)$ of exponent $p$ and $|P^p|=p$ and with the rank of  $P/\Phi(P)$ being $1+k(p-1)$  under  the condition that the multilinear identities  in the associated Lie ring $L(B(\infty, p))$ of the free countably-generated (restricted) Burnside group of exponent $p$  are `really new' for every degree $1+r(p-1)$ for $r=1,2,\dots,k$.  `Really new' here means that they are not consequences of identities of smaller degrees. All multilinear identities  in $L(B(\infty, p))$  were described by M.~R.~Vaughan-Lee \cite{v-l}, and G.~E.~Wall \cite{wal86}  proved that only those of degrees  $1+r(p-1)$ can be really new. Wall \cite{wal86}    showed that the aforementioned condition also implies the existence of anti-Hughes groups with $|P:H_p(P)|=p^{1+k(p-1)}$.

At the moment such identities are known to be really new only for degrees $p$ (the $(p-1)$-Engel identity) for all $p$, and $2p-1$ (Wall's identity) for a few primes $p=5,7,11, \dots, 19$ (possibly a few more), which was established with the aid of computers (only for $p=5$ Wall's calculations were done manually in \cite{wal67}). Notably, the construction in \cite{khu86} of secretive $p$-groups of rank $1+k(p-1)$ succeeds under the same conditions on the  identities  of $L(B(\infty, p))$ that ensure the existence of anti-Hughes groups, as in \cite{wal67}, \cite{wal74}, \cite{wal86}. 
However, for the counterexamples in the present paper it is  sufficient for us to use Wall's secretive $p$-groups from Lemma~\ref{wall}, which exist for every prime $p$. Secretive $p$-groups with bigger rank of $P/\Phi(P)$ may still prove to be useful for refuting weaker conjectures in the future. The same applies to the prospective anti-Hughes groups of exponent greater than $p^2$ as explained in \S\,\ref{s-a-h}, as well as anti-Hughes groups with bigger index of $H_p(P)$.
However, for these more difficult constructions the underlying problem of really new identities in $L(B(\infty, p))$ must be tackled first.

We now proceed to the construction of our counterexamples.

\begin{lem}
\label{construction}
Let $P$ be Wall's secretive $p$-group from Lemma~\ref{wall} and let $s>0$ be an integer. Then there exists a homocyclic group $U_s$ of exponent $p^s$
admitting an action of $P$ by automorphisms such that in the semidirect product $U_sP$ the following hold:
\begin{enumerate}
\item[\rm (a)]
the order of any element in $U_sP\setminus U_s\Phi(P)$ is $p^2$;
\item[\rm (b)]
the order of any element in $U_s\Phi(P)$ is at most $p^{s+1}$.
\end{enumerate}
\end{lem}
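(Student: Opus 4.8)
The plan is to realize $U_s$ as a quotient of the group algebra of $P$ over $\ZZ/p^s\ZZ$, and to reduce the order condition in (a) to a single condition on the central element $z$. Writing $U_s$ additively and letting $P$ act on the left, elements of the semidirect product are pairs $(u,g)$ with product $(u_1,g_1)(u_2,g_2)=(u_1+{}^{g_1}u_2,\,g_1g_2)$, and an easy induction gives
$$(u,g)^n=\Big(\textstyle\sum_{i=0}^{n-1}{}^{g^i}u,\ g^n\Big).$$
Since $(u,g)\mapsto g$ is a homomorphism onto $P$, the order of $(u,g)$ is always a multiple of the order of $g$. This already disposes of part (b): if $g\in\Phi(P)$ then $g^p=1$ because $\Phi(P)$ has exponent $p$ by Lemma~\ref{wall}(a), so $(u,g)^p=(w,1)$ with $w\in U_s$; as $U_s$ has exponent $p^s$ we get $(u,g)^{p^{s+1}}=(p^sw,1)=1$.

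For part (a) the key observation is that the relevant ``norm'' operator factors through the centre. Let $g\in P\setminus\Phi(P)$; by Lemma~\ref{wall}(c) it has order $p^2$ and $g^p=z^j$ with $p\nmid j$. Writing $A_h$ for the automorphism of $U_s$ induced by $h\in P$ and splitting the index $i=pq+r$,
$$\sum_{i=0}^{p^2-1}A_g^{\,i}=\Big(\sum_{r=0}^{p-1}A_g^{\,r}\Big)\Big(\sum_{q=0}^{p-1}A_z^{\,jq}\Big)=\Big(\sum_{r=0}^{p-1}A_g^{\,r}\Big)\,\nu,$$
where $\nu=\sum_{k=0}^{p-1}A_z^{\,k}$ is the norm of the central subgroup $\langle z\rangle$ (here I use that $A_z$ is central, hence commutes with $A_g$, and that $jq$ runs over all residues mod $p$). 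Hence $(u,g)^{p^2}=1$ for every $u$ as soon as the single operator $\nu$ vanishes on $U_s$; combined with the divisibility remark above, this forces the order of $(u,g)$ to be exactly $p^2$.

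It therefore remains to produce a homocyclic $\ZZ/p^s\ZZ$-module $U_s$ with a $P$-action on which $\nu=0$. The plan is to take $U_s=R[P]/(\nu)$, where $R=\ZZ/p^s\ZZ$, $\nu=\sum_{k=0}^{p-1}z^k\in R[P]$ is central, and $(\nu)$ is the (two-sided) ideal it generates; then $P$ acts by left multiplication and $\nu$ annihilates the quotient by construction. To see that $U_s$ is homocyclic of exponent $p^s$, I would decompose $R[P]$ over the central subalgebra $R[\langle z\rangle]$ along the cosets of $\langle z\rangle$, which exhibits $R[P]\cong R[\langle z\rangle]^{|P|/p}$, whence
$$U_s\cong\big(R[x]/(1+x+\cdots+x^{p-1})\big)^{|P|/p}\cong\big(\ZZ/p^s\ZZ\big)^{(p-1)|P|/p},$$
using that $x^p-1=(x-1)(1+x+\cdots+x^{p-1})$ and that the remaining cyclotomic factor is monic of degree $p-1$, so the quotient is $R$-free of that rank.

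I expect the only genuinely delicate point to be this last freeness/homocyclicity verification: one must check that passing to $R[P]/(\nu)$ does not create elements of order less than $p^s$, which is exactly what monicity of $1+x+\cdots+x^{p-1}$ guarantees over the non-field ring $\ZZ/p^s\ZZ$. The reduction of the order-$p^2$ condition to the vanishing of the central norm $\nu$ is the conceptual heart of the argument, and is precisely what makes a single uniform choice of $U_s$ possible, independent of the individual generators $g\in P\setminus\Phi(P)$.
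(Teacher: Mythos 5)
Your proof is correct, and it reaches the lemma by a genuinely different construction from the paper's. The deduction of (a) and (b) is the common core: both arguments rest on the vanishing of the norm of the central element $z$ on $U_s$. The paper states this as $u\cdot u^z\cdots u^{z^{p-1}}=1$, deduces $(uz^k)^p=1$ for $k\not\equiv 0 \pmod p$, and applies it to $(wh)^p=vz^k$; your factorization of the operator $\sum_{i=0}^{p^2-1}A_g^{\,i}$ through $\nu=\sum_{k=0}^{p-1}A_z^{\,k}$ is an equivalent, slightly more explicit packaging of the same computation, and your divisibility remark via the projection onto $P$ even pins down the exact order $p^2$, a point the paper leaves implicit. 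Where you genuinely diverge is in producing the module. The paper takes a complex irreducible representation of $P$ faithful on $\langle z\rangle$, where $z$ acts as a primitive $p$-th root of unity, so the norm relation holds automatically on all of $V$; it then reduces the lattice $A$ spanned by a single $P$-orbit modulo $p^s$, and homocyclicity of exponent $p^s$ is immediate because a finitely generated torsion-free abelian group is free. You instead take $U_s=R[P]/(\nu)$ with $R=\ZZ/p^s\ZZ$, so the norm relation holds by fiat, and the burden shifts to homocyclicity, which you settle correctly by decomposing $R[P]$ as a free $R[\langle z\rangle]$-module on coset representatives and dividing by the monic polynomial $1+x+\cdots+x^{p-1}$. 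Your route is more elementary (no complex representation theory) and more explicit (it even exhibits the rank $(p-1)|P|/p$); the paper's route buys the exponent statement for free at the cost of invoking character theory. The two constructions are compatible: since the norm annihilates the paper's lattice $A$, the cyclic module $\ZZ[P]a$ is a quotient of $\ZZ[P]/(\nu)$, so the paper's $U_s$ is a $P$-module quotient of yours; in that sense your choice is the universal one.
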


\begin{proof}
Recall that $P^p=\langle z\rangle$ is a central subgroup of $P$ of order $p$.
Consider a complex irreducible representation of $P$ in which $\langle z\rangle$ is represented faithfully, and let $V$ be the corresponding vector space over $\CC$. We use the usual centralizer notation for the natural semidirect product $VP$  regarding $V$ as an additive group. Since $z\in Z(P)$ and $V$ is an irreducible $\CC P$-module, we have $C_V(z)=0$ and therefore
\begin{equation}\label{e-split}
  v+vz+vz^2+\dots +vz^{p-1}=0\qquad \text{for any }v\in V.
\end{equation}

Pick an arbitrary non-zero vector $a\in V$ and let
$$
A={}_+\langle ag\mid g\in P\rangle
$$
be the additive subgroup of $V$ generated by the orbit of $a$ under the action of~$P$. As a finitely generated subgroup of the torsion-free additive group $V$, the group $A$ is a free abelian group of finite rank. We now put $U_s=A/p^sA$, which is a finite homocyclic $p$-group of exponent $p^s$ admitting the induced action of~$P$.

We now switch to multiplicative notation for the group operations in $U_s$ and the semidirect product $U_sP$.
Note that by \eqref{e-split} we have a similar property
\begin{equation*}
  u\cdot u^z\cdot u^{z^2}\cdots u^{z^{p-1}}=1\qquad \text{for any }u\in U_s.
\end{equation*}
Since $z^p=1$, the last equation is well known to be equivalent to
\begin{equation*}
(uz)^p= u\cdot u^z\cdot u^{z^2}\cdots u^{z^{p-1}}=1\qquad \text{for any }u\in U_s.
\end{equation*}
Of course, the same holds for any non-trivial power of $z$:
\begin{equation}\label{e-split3}
(uz^k)^p= 1\qquad \text{for any }u\in U_s \text{ and any }k=1,2,\dots,p-1.
\end{equation}

Any element  $g\in U_sP\setminus U_s\Phi(P)$ has the form  $g=wh$ for $w\in U_s$ and $h\in P\setminus \Phi(P)$. Then
$$
g^{p}=(wh)^p=vh^p
$$
for some $v\in U_s$, while $h^p=z^k\ne 1$ by the property of Wall's $p$-group $P$ in Lemma~\ref{wall}. Therefore by \eqref{e-split3} we have
$$
g^{p^2}=((wh)^{p})^p=(vz^k)^p=1,
$$
as required.

It is also obvious that the exponent of $U_s\Phi (P)$ is at most $p^{s+1}$, since  the exponent of $U_s$ is $p^s$ and  the exponent of  $\Phi(P)$ is $p$ by Lemma~\ref{wall}.
\end{proof}

We now complete the proof of Theorem~\ref{t-a}.

\begin{proof}[Proof of Theorem~\ref{t-a}]
Recall that $c$ is a positive real number, and $p$ is a prime such that $p>3/c$. We need to construct a finite $p$-group $G$ and an abelian normal subgroup $N$ such that $o(G)<o(N)^c$.

Set $s=p+1$ in Lemma \ref{construction}. We set $G=U_sP$ and $N=U_s$. Write $|G|=p^n$. Using the estimates for the element orders in (a) and (b) of Lemma \ref{construction}, we have
$$
o(G)\leq\frac{p^2(p^n-p^{n-p})+p^{p+2}p^{n-p}}{p^n}=p^2-p^{2-p}+p^2<2p^2\leq p^3
$$
On the other hand, for example, by   \cite[Lemma 2.6]{jai},
$$
o(N)\geq p^{p+1}-p^p\geq p^p.
$$
Therefore, since $3\leq pc$, we have
$$
o(G)\leq p^3\leq p^{pc}\leq o(N)^c,
$$
as desired.
\end{proof}

\begin{proof}[Proof of Corollary~\ref{cor}]
 Since $7>3/(1/2)$, Theorem~\ref{t-a} provides counterexamples to Question~\ref{jz} for all primes $p\geq 7$. In fact, the same example in the proof of Theorem~\ref{t-a} works also for $p=5$ and $c=1/2$. Indeed, for these values we have
$o(G)<2\cdot 5^2$ as above, which is less than $(5^{5+1}-5^5)^{1/2}\leq o(N)^{1/2}$.
\end{proof}

\section{Related questions}

The function $o(G)$ may have interest by itself. The analogous functions for character degrees and class sizes have been studied (see, for instance,  \cite{mn} or \cite{gr}). By analogy with Theorem A of \cite{mn} and Theorem 11 of \cite{gr}, we propose the following.

\begin{con}
\label{a5}
Let $G$ be a finite group. If $o(G)<o(A_5)$ then $G$ is solvable.
\end{con}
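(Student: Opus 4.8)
The plan is to prove the equivalent lower bound $o(G)\ge o(A_5)=211/60$ for every non-solvable finite group $G$, arguing by minimal counterexample together with the classification of finite simple groups. The only monotonicity of $o$ that comes for free, but which already does most of the structural work, is monotonicity under quotients: for $N\trianglelefteq G$ the order of $gN$ divides that of $g$, so $|gN|\le|g|$ for every $g$, and grouping the $|N|$ elements of each coset gives
$$
o(G)=\frac{1}{|G/N|}\sum_{\bar g\in G/N}\frac{1}{|N|}\sum_{g\in gN}|g| \;\ge\; \frac{1}{|G/N|}\sum_{\bar g\in G/N}|\bar g|=o(G/N).
$$
Let $G$ be a counterexample of least order. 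If some $N\ne1$ had $G/N$ non-solvable, then minimality would give $o(G/N)\ge o(A_5)$, hence $o(G)\ge o(G/N)\ge o(A_5)$, a contradiction; so every proper non-trivial quotient of $G$ is solvable.

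From here the structure of $G$ is forced by routine arguments: a minimal normal subgroup $N$ cannot be abelian (else $G$ would be solvable-by-solvable), so $N=S_1\times\dots\times S_k$ with the $S_i\cong S$ non-abelian simple; $N$ is the unique minimal normal subgroup (two distinct ones would embed $G$ into a product of its solvable quotients); and $C_G(N)=1$, so that $S^k\le G\le\Aut(S^k)=\Aut(S)\wr \fS_k$ with $G/S^k$ solvable. Thus the whole problem reduces to the following claim: \emph{if $S$ is non-abelian simple and $S^k\le G\le\Aut(S^k)$, then $o(G)\ge o(A_5)$}, the almost simple case $k=1$ being the tightest (permuting $k\ge2$ factors only introduces further elements of large order).

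For this claim the classification is the natural engine. I would first record $o(S)\ge o(A_5)$ for every non-abelian simple $S$, treating the finitely many small groups (the sporadic groups, $\fA_n$ for small $n$, and the low-rank Lie-type groups $\PSL(2,q)$ and the like) by direct computation, and the infinite families by exhibiting a bounded-below proportion of regular semisimple elements whose orders grow with $|S|$, forcing $o(S)$ away from the boundary value $211/60$. One then has to pass from $S$ to $G$, controlling the contribution of the outer cosets coming from field, graph, diagonal and factor-permuting automorphisms.

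The main obstacle is precisely this last passage, and it is subtle for a reason highlighted by the present paper: one \textbf{cannot} reduce $o(G)$ to $o(N)=o(S^k)$, because the inequality $o(G)\ge o(N)$ for a normal subgroup $N$ is false in general --- this is exactly the content of Theorem~\ref{t-a}. Hence the extension by the solvable group $G/S^k$ must be estimated directly. The danger is that an outer coset can be rich in involutions that depress the average (for instance the transpositions in $\fS_n\setminus\fA_n$), and the crux is to show that across all almost simple groups these cheap elements are always outweighed by the abundance of elements of order $\ge5$ forced by the simple socle. Because $A_5$ attains the bound with no slack, the estimates for the smallest almost simple groups are the delicate ones and would have to be verified essentially exactly, presumably with computer assistance, while the generic Lie-type case should follow from uniform lower bounds on element orders.
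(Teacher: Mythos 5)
You should first note what this statement is in the paper: it is Conjecture~\ref{a5}, not a theorem. The paper offers no proof of it at all --- it is proposed as an open problem, motivated by analogy with results on average character degrees and commuting probability (\cite{mn}, \cite{gr}) and reformulated in terms of $\psi(G)=\sum_{g\in G}|g|$ as a ``dual'' to the solvability criterion proved in \cite{bk}. So there is no proof in the paper to compare yours against; the only question is whether your proposal itself constitutes a proof. It does not. The positive part first: your reduction is correct and is the standard skeleton for solvability criteria of this kind. The monotonicity $o(G)\ge o(G/N)$ is valid (each element of a coset $gN$ has order divisible by, hence at least, the order of $gN$ in $G/N$; this fact appears already in \cite{jai}), and the minimal-counterexample analysis --- every proper quotient solvable, a unique non-abelian minimal normal subgroup $N\cong S^k$ with $C_G(N)=1$, hence $S^k\le G\le\Aut(S)\wr\fS_k$ --- is sound. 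You are also right, and it is worth emphasizing, that Theorem~\ref{t-a} of this very paper blocks any attempt to finish by passing from $G$ to the socle via an inequality $o(G)\ge o(N)$ for normal $N$; that inequality is false.

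The genuine gap is that the proposal stops exactly where the real work begins. Nothing is proved about the reduced configuration: there is no argument (only a description of an intended argument) that $o(S)\ge 211/60$ for every non-abelian simple group $S$; no treatment of the outer cosets of almost simple groups, which you yourself identify as ``the crux'' because involution-rich cosets can depress the average; and no justification of the claim that $k=1$ is the extremal case --- the parenthetical remark that permuting $k\ge2$ factors ``only introduces further elements of large order'' is an assertion, not a proof (for instance, the swapping coset of $S\wr C_2$ does contain $|S|$ involutions, so one must actually estimate their proportion). Deferring the small cases to ``computer assistance'' and the generic cases to unstated ``uniform lower bounds on element orders'' in groups of Lie type leaves the heart of the matter untouched. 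In short: your outline is a reasonable plan of attack, consistent with how such theorems are typically proved via the classification, but as submitted it is a research programme, not a proof --- which is fitting, since the statement is posed in the paper precisely as a conjecture.
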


Notice that if we set $\psi(G)=\sum_{g\in G}|g|$, then $o(G)=\psi(G)/|G|$. The function $\psi$ was defined in \cite{aai}, where it was proved that if $G$ is a group of order $n$  then $\psi(G)<\psi(C_n)$. Many papers on this function have been published since then. For instance, in \cite{hlm} it was conjectured that if a group $G$ has order $n$ and  $\psi(G)>\frac{211}{1617}\psi(C_n)$ then $G$ is solvable. (Notice that $\psi(A_5)=211$ and $\psi(C_{60})=1617$.) This conjecture was proved in \cite{bk}. We can reformulate Conjecture \ref{a5} in terms of $\psi$ as follows: if $G$ is a group of order $n$ and $\psi(G)<\frac{211}{60}n$ then $G$ is solvable. Notice that this would be a dual result to \cite{bk}.

Although not directly related with $o(G)$, we take this opportunity to record the following consequence of the main result of \cite{jai}. It was asked in \cite[Question 1 and p. 246]{mor} whether $k(G)/|\cd(G)|\to\infty$ when $G$ is solvable and $|G|\to\infty$, or even $k(G)/d(|G|)\to\infty$ when $G$ is solvable and $|G|\to\infty$. (Here $\cd(G)$ stands for the number of irreducible character degrees of $G$ and $d(|G|)$ is the number of divisors of $|G|$.) These questions have an affirmative answer for nilpotent groups.

\begin{pro}
We have
\begin{enumerate}
\item
$\frac{k(G)}{d(|G|)}\to\infty$ when $G$ is a nilpotent and $|G|\to\infty$.
\item
In particular, $\frac{k(G)}{|\cd(G)|}\to\infty$ when $G$ is nilpotent and $|G|\to\infty$.
\end{enumerate}
\end{pro}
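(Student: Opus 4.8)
The plan is to split off the problem prime by prime and then feed in the growth of $k$ provided by the main result of \cite{jai}. Write the nilpotent group as the direct product $G=P_1\times\cdots\times P_r$ of its Sylow subgroups, with $|P_i|=p_i^{a_i}$ for distinct primes $p_i$. Since the number of conjugacy classes is multiplicative over direct products, and the divisor-counting function is multiplicative on coprime arguments with $d(p^a)=a+1$, we get $k(G)=\prod_i k(P_i)$ and $d(|G|)=\prod_i(a_i+1)$, whence
$$
\frac{k(G)}{d(|G|)}=\prod_{i=1}^r\frac{k(P_i)}{a_i+1}.
$$
Statement (ii) will then follow immediately from (i): every irreducible character degree divides $|G|$, so $\cd(G)$ is a set of divisors of $|G|$, giving $|\cd(G)|\le d(|G|)$ and hence $k(G)/|\cd(G)|\ge k(G)/d(|G|)$.

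The essential input is the main result of \cite{jai}, which for a single $p$-group $P$ yields the superlinear lower bound $k(P)/\log_{p}|P|\to\infty$ as $|P|\to\infty$. This holds uniformly over all primes: since $p\ge 2$ we have $\log_p|P|\le\log_2|P|$, so it suffices to know $k(P)/\log|P|\to\infty$, which is exactly the $p$-group instance of the nilpotent statement of \cite{jai}. Writing $a=\log_p|P|$, this says precisely that each factor $k(P)/(a+1)$ above tends to infinity with $|P|$. Applying this with threshold $M=1$, together with a direct check on the finitely many $p$-groups of small order, also shows that $k(P)\ge a+1$ for every $p$-group $P$, so every factor in the product is at least $1$ (alternatively one may cite the elementary bound $k(P)\ge(p-1)\log_p|P|+1$).

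With these two facts the conclusion is a packing argument. Fix $M>0$. By the growth statement there is a bound $B=B(M)$ such that any $p$-group $P$ with $|P|>B$ has $k(P)/(a+1)>M$. Suppose $G$ is nilpotent with $k(G)/d(|G|)\le M$. Because every factor in $\prod_i k(P_i)/(a_i+1)$ is at least $1$, the bound on the product forces every individual factor to be at most $M$; hence $|P_i|\le B$ for all $i$. In particular each prime divisor satisfies $p_i\le|P_i|\le B$, so $r\le\pi(B)$ and
$$
|G|=\prod_{i=1}^r|P_i|\le B^{\pi(B)}.
$$
Thus only finitely many nilpotent groups satisfy $k(G)/d(|G|)\le M$, which is exactly the assertion that $k(G)/d(|G|)\to\infty$ as $|G|\to\infty$.

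The only genuinely hard step is the superlinear bound $k(P)/\log_p|P|\to\infty$, which we import from \cite{jai}; everything else is bookkeeping. The one structural point worth stressing is that the comparison must be carried out factor by factor after passing to Sylow subgroups. One cannot compare $k(G)$ with $d(|G|)$ globally, since for highly composite $|G|$ the quantity $d(|G|)$ can be as large as $\exp\bigl(\Theta(\log|G|/\log\log|G|)\bigr)$, far outstripping any bound on $k(G)$ that is merely polynomial in $\log|G|$. It is the Sylow decomposition that confronts the small local quantity $a_i+1=d(p_i^{a_i})$ with $k(P_i)$ one prime at a time, where the superlinear growth of $k$ wins.
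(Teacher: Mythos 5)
Your proposal is correct and takes essentially the same route as the paper: Sylow factorization together with multiplicativity of $k$ and $d$, the superlinear lower bound of \cite{jai} applied to each $p$-group factor, and the factor-wise bound $k(P)/d(|P|)\geq 1$. The only real differences are that you spell out the final limiting argument that the paper leaves as a ``calculus exercise'', and that you handle large primes via uniformity of Jaikin's bound (through $\log_p|P|\le\log_2|P|$), whereas the paper instead treats the case of fixed $m$ and $p\to\infty$ separately using the elementary estimate $k(P)\geq|Z(P)|\geq p$.
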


\begin{proof}
We consider first the case of prime power order groups. Write $|P|=p^m$, where $p$ is a fixed prime.  Note that $d(|P|)=m+1=\log_p|P|+1=\frac{1}{\log_2p}\log_2|P|+1$.  If we use the lower bound for $k(P)$ given in Theorem 1.1 of \cite{jai}, we obtain that $\frac{k(P)}{d(|P|)}\to\infty$  when $m\to\infty$.

Now, assume that $|P|=p^m$, where $m$ is fixed. In this case, $k(P)\geq|Z(P)|\geq p$, so
$$
\frac{k(P)}{d(|P|)}\geq\frac{p}{m+1}\to\infty\text{\,\,  when $p\to\infty$}.
$$

Finally, notice that if $G=P_1\times\cdots\times P_t$ is nilpotent with Sylow subgroups $P_1,\dots, P_t$ then
$$
\frac{k(G)}{d(|G|)}=\frac{k(P_1)}{d(|P_1|)}\cdot\cdots\cdot\frac{k(P_t)}{d(|P_t|)},
$$
using that $d$ is a multiplicative function. Now, it is a calculus exercise to check that the result follows from the previous paragraphs and the fact that $\frac{k(P)}{d(|P|)}\geq1$ for every $p$-group $P$.
\end{proof}

\section*{Acknowledgements}
\noindent The work of the first author was supported by Mathematical Center in Akademgorodok, the agreement with Ministry of Science and High Education of the Russian Federation no.~075-15-2019-1613. The research of the second author is  supported by Ministerio de Ciencia e Innovaci\'on PID-2019-103854GB-100 and FEDER funds.

\end{document}